\tikzset{                        
    symbol/.style={
        ,draw=none
        ,every to/.append style={%
            edge node={node [sloped, allow upside down, auto=false]{$#1$}}}
    }
}
\newtheorem{thm}{Theorem}[section]
\newtheorem{lem}[thm]{Lemma}
\theoremstyle{definition}
\newtheorem{defn}[thm]{Definition}
\theoremstyle{remark}
\newtheorem{rem}[thm]{Remark}
\newtheorem*{ntt}{Notation}
\numberwithin{equation}{section}
\newcommand{\Z}{\mathbb{Z}}      
\newcommand{\R}{\mathbb{R}}      
\newcommand{\norm}[1]{\left\lVert #1\right\rVert} 
\newcommand\restr[2]{{           
  \left.\kern-\nulldelimiterspace #1%
  \right|_{#2}%
 }}
\newcommand\minus{               
  \setbox0=\hbox{-}%
  \vcenter{%
    \hrule width\wd0 height \the\fontdimen8\textfont3%
  }%
}
\newcommand{\ki}{\mathrm{K}} 
\newcommand{\kn}{\mathrm{N}} 
\newcommand{\ul}[1]{\mathbf{#1}}
\begin{document}

\title{Knights are $24/13$ times faster than the king}%
\author{Christian T\'afula}%
\address{D\'epartment de Math\'ematiques et Statistique, %
 Universit\'e de Montr\'eal, %
 CP 6128 succ Centre-Ville, %
 Montreal, QC H3C 3J7, Canada}%
\email{christian.tafula.santos@umontreal.ca}%

\subjclass[2020]{11B13, 11B75}%
\keywords{sumsets, chess knight}%

\begin{abstract}
 On an infinite chess board, how much faster can the knight reach a square when compared to the king, in average? More generally, for coprime $b>a \in \mathbb{Z}_{\geq 1}$ such that $a+b$ is odd, define the $(a,b)$-knight and the king as
 \[ \mathrm{N}_{a,b}= \{(a,b), (b,a), (-a,b), (-b,a), (-b,-a), (-a,-b), (a,-b), (b, -a)\}, \]
 \[ \mathrm{K}=\{(1,0), (1,1), (0,1), (-1,1), (-1,0), (-1,-1), (0,-1), (1,-1)\}  \subseteq \mathbb{Z}^2, \]
 respectively. One way to formulate this question is by asking for the average ratio, for $\mathbf{p}\in \mathbb{Z}^2$ in a box, between $\min\{h\in \Z_{\geq 1} ~|~ \mathbf{p}\in h\mathrm{N}\}$ and $\min\{h\in \Z_{\geq 1} ~|~ \mathbf{p}\in h\mathrm{K}\}$, where $hA = \{\ul{a}_1+\cdots+\ul{a}_h ~|~ \ul{a}_1,\ldots, \ul{a}_h \in A\}$ is the $h$-fold sumset of $A$. We show that this ratio equals $2(a+b)b^2/(a^2+3b^2)$.
\end{abstract}
\maketitle

\section{Introduction}
 Let $A\subseteq \Z^2$ be a finite set. For each $\ul{p}\in \Z^2$, we are interested in determining the smallest $h\geq 1$ for which we can write $\ul{p} = \ul{a}_1 + \cdots +\ul{a}_h$, where $\ul{a}_i \in A$ for $1\leq i\leq h$ are not necessarily distinct. Writing $hA = \{\ul{a}_1+\cdots+\ul{a}_h ~|~ \ul{a}_1,\ldots, \ul{a}_h \in A\}$ for the $h$-fold sumset of $A$, we define $A(0,0) := 0$ and, for $(x,y)\neq (0,0)$,
 \begin{equation}
  A(x,y) := \min\{h\geq 1 ~|~ (x,y) \in hA\}. \label{Axy}
 \end{equation}
 
 The study of the \emph{size} of $hA$ goes back to Khovanskii \cite{kho92}, who showed that $|hA|$ is given by a polynomial in terms of $h$ for $h$ sufficiently large (cf. Nathanson--Ruzsa \cite{natruz02} for a more combinatorial proof). In another direction, Granville--Shakan--Walker \cite{grasha20, grashawal23} studied the \emph{structure} of $hA$, showing that, roughly speaking, for every large $h$, every element that ``could be'' in $hA$ is, in fact, in $hA$.
 
 In this note, we will study the behaviour of $A(x,y)$ for a particular class of sumsets. Thinking of $\Z^2$ as an infinite chess board, a finite set $A$ may be thought of as a \emph{piece} placed at the origin, being able to move only to $\ul{a}\in A$. Then, in two moves, the piece is able to reach every point in $2A$, and so on. We say that $A$ is:
 
 \begin{itemize}   
  \item \emph{Primitive} if $A(x,y)$ is well-defined for every $x,y\in\Z$;\smallskip
   
  \item \emph{Symmetric} if $(a, b)\in A$ implies $(\delta_1 a, \delta_2 b)$, $(\delta_1 b, \delta_2 a) \in A$ for every choice of $\delta_1,\delta_2 \in \{-1, +1\}$;\smallskip
 \end{itemize}
 
  \begin{ntt}
  For real functions $f,g:\R_{>0} \to \R$, we write $f(x) = O(g(x))$ if there is $M>0$ such that $f(x) \leq Mg(x)$ for every large $x$.
 \end{ntt}
 
 \subsection{The king and the \texorpdfstring{$(a,b)$}{(a,b)}-knight}
  The two pieces that will concern us in this note are the following:
  
  \begin{enumerate}[label=\alph*)]
   \item The \emph{king} $\ki = \{(1,0), (1,1), (0,1), (-1,1), (-1,0), (-1,-1), (0,-1), (1,-1)\}$ is the smallest symmetric piece with $(1,0), (1,1)\in \ki$.
   
   \FloatBarrier
   \begin{figure}[!htb]
   \centering
   \noindent
   \begin{minipage}{0.39\textwidth}\begin{center}
    \bigskip
    \begin{tikzpicture}[scale=0.75, every node/.style={scale=0.75}]
     \draw[help lines, ystep=1, xstep=1, black!80] (-3,-3) grid (4,4);
     
     \foreach \x in {-3,-1,1,3}
     \foreach \y in {-2,0,2}{
      \fill[pattern=north east lines, pattern color=black!75] (\x,\y)--(\x,\y +1)--(\x +1,\y +1)--(\x +1,\y);
      \fill[pattern=north east lines, pattern color=black!75] (\y,\x)--(\y,\x +1)--(\y +1,\x +1)--(\y +1,\x);
     }
     
     \foreach \x in {-1,0,1}
     \foreach \y in {-1,0,1}{
      \ifthenelse{\x = 0}{
       \draw[->, black, thick] (0+.5,0+.5)--(\x +.5, 1.2*\y +.5) }{
       \ifthenelse{\y = 0}{
       \draw[->, black, thick] (0+.5,0+.5)--(1.2*\x +.5, \y +.5) }{
       \draw[->, black, thick] (0+.5,0+.5)--(\x +.5, \y +.5) }};
       
       \draw[black, thick] (\x,\y)--(\x,\y +1)--(\x +1, \y +1)--(\x +1, \y)--(\x,\y);
     }
     
     \draw[fill=white] (0 +.5,0 +.5) circle (1em);
     \draw[black] (0 +.5,0 +.5) node {\large{$\ki$}};
    \end{tikzpicture}
    \end{center}\end{minipage} \hspace{1em}
    \begin{minipage}{0.39\textwidth}\begin{center}
    \bigskip
    \begin{tikzpicture}[scale=0.75, every node/.style={scale=0.75}]
     \draw[help lines, ystep=1, xstep=1, black!80] (-3,-3) grid (4,4);

     \fill[pattern=north east lines, pattern color=black!20] (-3,-3)--(-3,3+1)--(3+1,3+1)--(3+1, -3);
     \fill[pattern=north east lines, pattern color=black!40] (-2,-2)--(-2,2+1)--(2+1,2+1)--(2+1, -2);
     \fill[pattern=north east lines, pattern color=black!80] (-1,-1)--(-1,1+1)--(1+1,1+1)--(1+1, -1);
     \fill[white] (-0,-0)--(-0,0+1)--(0+1,0+1)--(0+1, -0);
     
     \foreach \x in {3,2,1,0}{
      \draw[black, thick] (-\x,-\x)--(-\x,\x +1)--(\x +1, \x +1)--(\x +1, -\x)--(-\x,-\x);
     }
     
     \foreach \x in {0,...,3}
     \foreach \y in {0,...,3}
     \foreach \sg in {-1, 1}
     \foreach \sgg in {-1, 1}{
      \draw[white, fill=white] (\sg*\x +.5,\sgg*\y +.5) circle (.8em);
      \draw[black] (\sg*\x +.5,\sgg*\y +.5) node {\ifthenelse{\x > \y}{\Large{$\x$}}{\large{$\y$}}};
     }
    \end{tikzpicture}
   \end{center}\end{minipage}
   \begin{center}
    \begin{minipage}{0.75\textwidth}\begin{center}
     \captionsetup{singlelinecheck=off, width=\textwidth}
     \caption{The king's movements (left) and $\ki(x,y)$ (right).}
    \end{center}\end{minipage}
   \end{center}
   \label{fig1}
   \end{figure}

   \item For $a,b \in \Z_{\geq 1}$, we define the \emph{$(a,b)$-knight} $\kn_{a,b}$ by the set of moves   
   \begin{align*}
    \kn_{a,b} := \{\,& (b,a),\, (a,b),\, (-a,b),\, (-b,a),\\
    &(-b,-a),\, (-a,-b),\, (a,-b),\, (b,-a)\,\}; 
   \end{align*}
   in other words, $\kn_{a,b}$ is the smallest symmetric piece with $(a,b)\in \kn_{a,b}$. The usual chess knight is the $(1,2)$-knight, which we call just \emph{knight} and denote it by $\kn$.\phantom{\qedhere}
   
   \FloatBarrier
   \begin{figure}[!htb]
   \centering
   \noindent
   \begin{minipage}{0.39\textwidth}\begin{center}
    \bigskip
    \begin{tikzpicture}[scale=0.75, every node/.style={scale=0.75}]
     \draw[help lines, ystep=1, xstep=1, black!80] (-3,-3) grid (4,4);
     
     \foreach \x in {-3,-1,1,3}
     \foreach \y in {-2,0,2}{
      \fill[pattern=north east lines, pattern color=black!75] (\x,\y)--(\x,\y +1)--(\x +1,\y +1)--(\x +1,\y);
      \fill[pattern=north east lines, pattern color=black!75] (\y,\x)--(\y,\x +1)--(\y +1,\x +1)--(\y +1,\x);
     }
     
     \foreach \x in {-1,1}
     \foreach \y in {-1,1}{
      \draw[->, black, thick] (0+.5,0+.5)--(\x*2 +.5, \y*1 +.5);
      \draw[->, black, thick] (0+.5,0+.5)--(\x*1 +.5, \y*2 +.5);
       
      \draw[black, thick] (\x*2,\y*1)--(\x*2,\y*1 +1)--(\x*2 +1, \y*1 +1)--(\x*2 +1, \y*1)--(\x*2,\y*1);
      \draw[black, thick] (\x*1,\y*2)--(\x*1,\y*2 +1)--(\x*1 +1, \y*2 +1)--(\x*1 +1, \y*2)--(\x*1,\y*2);
     }
     
     \draw[fill=white] (0 +.5,0 +.5) circle (1em);
     \draw[black] (0 +.5,0 +.5) node {\large{$\kn$}};
    \end{tikzpicture}
    \end{center}\end{minipage} \hspace{1em}
    \begin{minipage}{0.39\textwidth}\begin{center}
    \bigskip
    \begin{tikzpicture}[scale=0.75, every node/.style={scale=0.75}]
     \draw[help lines, ystep=1, xstep=1, black!80] (-3,-3) grid (4,4);
     
     \draw[black] (0 +.5,0 +.5) node {\Large{$0$}};
     
     \foreach \x in {-1,1}
     \foreach \y in {-1,1}{
      \fill[pattern=north east lines, pattern color=black!80] (\x*2,\y*1)--(\x*2,\y*1 +1)--(\x*2 +1, \y*1 +1)--(\x*2 +1, \y*1)--(\x*2,\y*1);
      \fill[pattern=north east lines, pattern color=black!80] (\x*1,\y*2)--(\x*1,\y*2 +1)--(\x*1 +1, \y*2 +1)--(\x*1 +1, \y*2)--(\x*1,\y*2);
      
      \draw[white, fill=white] (\x*2 +.5,\y*1 +.5) circle (.8em);
      \draw[black] (\x*2 +.5,\y*1 +.5) node {\Large{$1$}};
      \draw[white, fill=white] (\x*1 +.5,\y*2 +.5) circle (.8em);
      \draw[black] (\x*1 +.5,\y*2 +.5) node {\Large{$1$}};
     }
     
     \foreach \x in {-3,-1,1,3}
     \foreach \y in {-3,-1,1,3}{
      \fill[pattern=north east lines, pattern color=black!40] (\x,\y)--(\x,\y +1)--(\x +1, \y +1)--(\x +1, \y)--(\x,\y);

      \draw[white, fill=white] (\x +.5,\y +.5) circle (.8em);
      \draw[black] (\x +.5,\y +.5) node {\Large{$2$}};
     }
     \foreach \x in {-2,0,2}
     \foreach \y in {-2,0,2}{
      \ifthenelse{\x = 0 \OR \y = 0}{\ifthenelse{\x = 0 \AND \y = 0}{}{
      \fill[pattern=north east lines, pattern color=black!40] (\x,\y)--(\x,\y +1)--(\x +1, \y +1)--(\x +1, \y)--(\x,\y);

      \draw[white, fill=white] (\x +.5,\y +.5) circle (.8em);
      \draw[black] (\x +.5,\y +.5) node {\Large{$2$}};}}{}
     }
     
    \foreach \sg in {-1,1}{
     \foreach \x in {-2,2}{
      \fill[pattern=north east lines, pattern color=black!20] (\x,\sg*3)--(\x,\sg*3 +1)--(\x +1, \sg*3 +1)--(\x +1, \sg*3)--(\x,\sg*3);

      \draw[white, fill=white] (\x +.5,\sg*3 +.5) circle (.8em);
      \draw[black] (\x +.5,\sg*3 +.5) node {\Large{$3$}};
     }
     \foreach \x in {-3,3}{
      \fill[pattern=north east lines, pattern color=black!20] (\x,\sg*2)--(\x,\sg*2 +1)--(\x +1, \sg*2 +1)--(\x +1, \sg*2)--(\x,\sg*2);

      \draw[white, fill=white] (\x +.5,\sg*2 +.5) circle (.8em);
      \draw[black] (\x +.5,\sg*2 +.5) node {\Large{$3$}};
     }
    }
     \foreach \x in {-3,-1,0,1,3}
     \foreach \y in {-3,-1,0,1,3}{
      \ifthenelse{\x = 0 \OR \y = 0}{\ifthenelse{\x = 0 \AND \y = 0}{}{
      \fill[pattern=north east lines, pattern color=black!20] (\x,\y)--(\x,\y +1)--(\x +1, \y +1)--(\x +1, \y)--(\x,\y);

      \draw[white, fill=white] (\x +.5,\y +.5) circle (.8em);
      \draw[black] (\x +.5,\y +.5) node {\Large{$3$}};}}{}
     }
     
     \foreach \x in {-2,2}
     \foreach \y in {-2,2}{
      \fill[pattern=north east lines, pattern color=black!0] (\x,\y)--(\x,\y +1)--(\x +1, \y +1)--(\x +1, \y)--(\x,\y);

      \draw[white, fill=white] (\x +.5,\y +.5) circle (.8em);
      \draw[black] (\x +.5,\y +.5) node {\Large{$\mathbf{4}$}};
     }

     \foreach \x in {3,2,1,0}{
      \draw[black, thick] (-\x,-\x)--(-\x,\x +1)--(\x +1, \x +1)--(\x +1, -\x)--(-\x,-\x);
     }
     
    \end{tikzpicture}
   \end{center}\end{minipage}
   \begin{center}
    \begin{minipage}{0.75\textwidth}\begin{center}
     \captionsetup{singlelinecheck=off, width=\textwidth}
     \caption{The knight's movements (left) and $\kn(x,y)$ (right).} 
    \end{center}\end{minipage}
   \end{center}
   \label{fig2}
   \end{figure}
   
   Not all $(a,b)$-knights are primitive. In fact, for $\kn_{a,b}$ to be primitive, it is necessary and sufficient that $\gcd(a,b) = 1$ and $a+b$ be odd. To see this, color $\Z^2$ like a chess board (i.e., paint $(x,y)$ white if $2\mid x+y$, and black otherwise). The necessary direction is then easy: $\gcd(a,b) \mid \gcd(x,y)$ for every point $(x,y)$ accessible to $\kn_{a,b}$, and if $a+b$ is even then $\kn_{a,b}$ never accesses black points. For the sufficient direction, note that since $\kn_{a,b}$ changes colors every move, it suffices to show that it can access all the white points, and by symmetry, it suffices to show that it accesses $(2,0)$. Since $(b,a) + (b,-a) = (2b,0)$ and $(a,b) + (a,-b) = (2a,0)$, the $(a,b)$-knight can access every point of the form $(2(ax+by), 0)$ for $x,y\in\Z$; which, since $\gcd(a,b) = 1$, implies that $\kn_{a,b}$ can access $(2,0)$.
  \end{enumerate}
  
  By the symmetries of $\kn_{a,b}$, to understand the behaviour of $\kn_{a,b}(x,y)$ it suffices to study $x\geq y\in\Z_{\geq 0}$, where $\ki(x,y) = x$. We will show the following:
  
  \begin{thm}\label{wklm}
  Let $b > a \geq 1$ be integers with $\gcd(a,b)=1$ and $a+b$ odd, and let $x \geq y \in \Z_{\geq 0}$.
  \begin{enumerate}[label=\textnormal{(\roman*)}]
   \item If $\displaystyle y \leq \frac{a}{b}\,x$, then $\displaystyle \kn_{a,b}(x,y) = \frac{x}{b} + O(b)$. \medskip
   
   \item If $\displaystyle y > \frac{a}{b}\,x$, then $\displaystyle \kn_{a,b}(x,y) = \frac{x+y}{a + b} + O(b)$.
  \end{enumerate}
 \end{thm}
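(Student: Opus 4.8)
The plan is to identify the two main terms as values of the gauge (Minkowski functional) of the octagon $C := \mathrm{conv}(\kn_{a,b})$, and then prove matching lower and upper bounds. For $\ul{p}=(x,y)$ with $x\ge y\ge 0$, the ray $\R_{\ge 0}\,\ul{p}$ meets the boundary of $C$ on one of two edges: the vertical edge from $(b,-a)$ to $(b,a)$, lying on the line $x=b$, when $y\le \frac{a}{b}x$; and the edge from $(b,a)$ to $(a,b)$, lying on the line $x+y=a+b$, when $y\ge\frac{a}{b}x$. Hence the gauge $\gamma(\ul p)=\min\{h\ge 0: \ul p\in hC\}$ equals $x/b$ in case (i) and $(x+y)/(a+b)$ in case (ii), which are exactly the claimed main terms. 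Since $hC\supseteq h\kn_{a,b}$, this gauge is the natural candidate for $\kn_{a,b}(\ul p)$ up to lower-order corrections.

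For the lower bound I would use that each move changes the $x$-coordinate by at most $b$ and the quantity $x+y$ by at most $a+b$. Thus $h$ moves give $\kn_{a,b}(x,y)\ge x/b$ and $\kn_{a,b}(x,y)\ge (x+y)/(a+b)$ simultaneously; a one-line computation ($x/b\ge (x+y)/(a+b)\iff ax\ge by$) shows the first dominates precisely when $y\le\frac{a}{b}x$ and the second when $y\ge\frac{a}{b}x$, so the lower bound already matches the main term with no error term.

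The content is the upper bound, which I would establish by construction using the two relevant edge-vectors as the bulk of the path. In case (i) I would write $(x,y)=c_1(b,a)+c_2(b,-a)+\ul{r}$, choosing nonnegative integers $c_1,c_2$ so that $(c_1+c_2)b$ and $(c_1-c_2)a$ approximate $x$ and $y$; since $y\le\frac{a}{b}x$ forces $|c_1-c_2|\le c_1+c_2$, such $c_i\ge 0$ exist with $c_1+c_2=x/b+O(1)$ and residual $\ul r$ of size $O(b)$ (the $x$-part is $<b$ and the $y$-part is $O(a)$ after matching the parity of $c_1-c_2$ to that of $c_1+c_2$). Case (ii) is identical with the pair $(b,a),(a,b)$: rounding $c_1+c_2=\frac{x+y}{a+b}$ and $c_1-c_2=\frac{x-y}{b-a}$ to the correct common parity keeps $c_1,c_2\ge 0$ (using $y>\frac{a}{b}x$) and leaves a residual $\ul r$ of size $O(b)$; points close to the boundary ray $y=\frac{a}{b}x$ can be handled by either pair since the two main terms agree there. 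In all cases the bulk uses $\gamma(\ul p)+O(1)$ moves.

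It remains to absorb the residual, and this is the main obstacle: I must show that any lattice point $\ul r=(p,q)$ with $\max(|p|,|q|)=O(b)$ is reachable in $O(b)$ moves --- crucially $O(b)$ and not $O(b^2)$. The naive device of iterating a fixed gadget such as $(b,a)+(b,-a)=(2b,0)$ fails, since $\ul r$ may itself have size comparable to $b$. Instead I would use that $(\pm2a,0),(\pm2b,0),(0,\pm2a),(0,\pm2b)$ are each reachable in two moves, and solve $p=2a\alpha+2b\beta$ by B\'ezout (this is where $\gcd(a,b)=1$ enters): reducing $\alpha$ modulo $b$ keeps $|\alpha|,|\beta|=O(b)$, so $(p,0)$, and likewise $(0,q)$, is reachable in $O(b)$ moves, and therefore so is any even point $(p,q)$. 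Parity is handled by reducing the odd cases to the even one, using a single genuine move to correct $p+q\bmod 2$ and the two-move gadget $(a,b)+(b,a)=(a+b,a+b)$ (here $a+b$ odd is used) to correct an even-sum point with both coordinates odd. Adding the bulk count $\gamma(\ul p)+O(1)$ to the residual count $O(b)$ then yields $\gamma(\ul p)+O(b)$, matching the lower bound and completing the proof.
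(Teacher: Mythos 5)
Your proposal is correct and follows essentially the same route as the paper: matching linear lower bounds, a bulk decomposition along the edge vectors $(b,a),(b,-a)$ in case (i) and $(a,b),(b,a)$ in case (ii), and a B\'ezout-plus-parity argument showing every point of size $O(b)$ is reachable in $O(b)$ moves uniformly in $a,b$ --- which is exactly the paper's Lemma~\ref{Bab} and your residual-absorption step. The only notable difference is cosmetic: for the lower bound in case (ii) you apply the functional $x+y$ directly (each move changes it by at most $a+b$), whereas the paper solves for the cone coordinates $t,u$ with $(x,y)=t(a,b)+u(b,a)$; both give the same bound $(x+y)/(a+b)$, and your version arguably justifies the inequality more transparently.
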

 
 In Subsection \ref{dnst}, we describe the distribution of $\kn/\ki$.

 \subsection{Average velocity in a box}
 Each finite set $A$ induces a metric $d_{A}(\ul{p},\ul{q}) := A(\ul{q}-\ul{p})$ for $\ul{p},\ul{q}\in\Z^2$. The king's metric coincides with the one induced by the $\max$ norm
 \[ \norm{(x,y)}_{\infty} = \max\{|x|,|y|\}, \]
 and thus we equip $\Z^2$ with this metric. For $h\geq 1$, write
 \[ \mathcal{B}_h := \{\ul{p}\in\Z^2 ~|~ \norm{\ul{p}}_{\infty} \leq h\}, \qquad \mathcal{B}_h^{*} := \mathcal{B}_h\setminus\{(0,0)\} \]
 for the ball and punctured ball of radius $h$, respectively. Note that $\mathcal{B}_h = \bigcup_{\ell=1}^{h} \ell\ki$ and $\partial\mathcal{B}_h = \{\ul{p}\in\Z^2 ~|~ \norm{\ul{p}}_{\infty} = h\} = h\ki \setminus \bigcup_{\ell=0}^{\ell-1} \ell\ki$. We have $|\partial \mathcal{B}_h| = 8h$ and $|\mathcal{B}^{*}_h| = 4 h(h+1)$.
 
 What is the average value of $A(x,y)$ in $\mathcal{B}_h$? For instance, the king $\ki$ is such that $\ki(x,y) =\ell$ if and only if $(x,y) \in \partial \mathcal{B}_{\ell}$; hence,
 \[ \frac{1}{|\mathcal{B}^{*}_h|} \sum_{\ul{p}\in \mathcal{B}^{*}_h} \ki(\ul{p}) = \frac{1}{4h(h+1)} \sum_{\ell=1}^{h} \ell\cdot 8\ell = \frac{2h}{3} + \frac{1}{3}. \]
 Thus, we consider the following notion of velocity, which can be understood intuitively as how fast the king $\ki$ sees the piece $A$ moving (see Remark \ref{gens}):
 
 \begin{defn}[Velocity]\label{boxvel}
  For a finite primitive set $A\subseteq \Z^{2}$, the \emph{average velocity} $v = v_{\ki}$ of $A$ is given by
  \[ v(A) := \lim_{h\to +\infty} \frac{2h}{3}\Bigg(\frac{1}{|\mathcal{B}_h|} \sum_{\ul{p}\in \mathcal{B}_h} A(\ul{p})\Bigg)^{-1}. \]
 \end{defn}
 
 The number $v(A)$ may be thought of as controlling how fast $A$ spreads through $\mathcal{B}_h$. Intuitively, from Theorem \ref{wklm}, one might conclude that the knight is almost, although not quite, \emph{twice} as fast as the king. Points of the type $(x,0)$, for example, can be accessed by the knight in around $x/2$ moves, while points of the form $(x,x)$ can be accessed in around $2x/3$ moves. We will show that
 \[ \frac{\sum_{\ul{p}\in \mathcal{B}_h} \ki(\ul{p})}{\sum_{\ul{p}\in \mathcal{B}_h} \kn(\ul{p})} \ \xrightarrow{h\to +\infty} \ \frac{24}{13}; \]
 in other words, the ``not quite'' is quantified by $2/13$. More generally:
 \begin{thm}\label{knab}
  Let $b > a  \geq 1$ be integers with $\gcd(a,b)=1$ and $a+b$ odd. Then:
  \[ v(\kn_{a,b}) = \frac{2(a+b)b^2}{a^2+3b^2}. \]
 \end{thm}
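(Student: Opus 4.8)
The plan is to compute the leading-order asymptotics of $\sum_{\ul{p}\in\mathcal{B}_h}\kn_{a,b}(\ul{p})$ as $h\to+\infty$ and substitute into Definition \ref{boxvel}. The defining normalization $\tfrac{2h}{3}$ is exactly the leading term of the king's average computed just before that definition, so $v(\ki)=1$ and
\[
v(\kn_{a,b}) \;=\; \lim_{h\to+\infty}\frac{\tfrac{2h}{3}}{\;\tfrac{1}{|\mathcal{B}_h|}\sum_{\ul{p}\in\mathcal{B}_h}\kn_{a,b}(\ul{p})\;}.
\]
Hence it suffices to show that the knight's average satisfies $\tfrac{1}{|\mathcal{B}_h|}\sum_{\ul{p}}\kn_{a,b}(\ul{p}) = \tfrac{a^2+3b^2}{3b^2(a+b)}\,h + O(1)$, the implied constant depending on $a,b$, and then take the reciprocal times $\tfrac{2h}{3}$.

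First I would use the dihedral symmetry $\kn_{a,b}(x,y)=\kn_{a,b}(|x|,|y|)=\kn_{a,b}(|y|,|x|)$ to reduce the sum over $\mathcal{B}_h$ to $8$ times the sum over the octant $\mathcal{O}_h := \{(x,y)\in\Z^2 : 0\leq y\leq x\leq h\}$. The only discrepancy comes from the bounding rays $y=0$ and $y=x$, which contain $O(h)$ lattice points, each of value $O(h)$; this produces an $O(h^2)$ correction, negligible against the $O(h^3)$ main term. On $\mathcal{O}_h$ I then invoke Theorem \ref{wklm} to replace $\kn_{a,b}(x,y)$ by $x/b$ on the sub-triangle $\mathrm{I}=\{0\leq y\leq \tfrac{a}{b}x\}$ and by $(x+y)/(a+b)$ on $\mathrm{II}=\{\tfrac{a}{b}x< y\leq x\}$. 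Since the per-point error is $O(b)$ uniformly in $(x,y)$, summing over the $O(h^2)$ points of $\mathcal{O}_h$ costs only $O(bh^2)=O(h^2)$, again lower order. A reassuring point is that the two linear pieces agree on the separating line $y=\tfrac{a}{b}x$ (both equal $x/b$ there), so the approximating profile is \emph{continuous}, and any misclassification of lattice points adjacent to that line is harmless.

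Next I would pass from sums to integrals: because the integrands are linear of size $O(h)$ over regions of area $O(h^2)$, the Riemann-sum error is $O(h^2)$. A direct evaluation gives $\iint_{\mathrm{I}} x\,dx\,dy = \tfrac{a}{3b}h^3$ and $\iint_{\mathrm{II}}(x+y)\,dx\,dy = \tfrac{(3b+a)(b-a)}{6b^2}h^3$, so the octant contributions are $\tfrac{a}{3b^2}h^3$ and $\tfrac{(3b+a)(b-a)}{6b^2(a+b)}h^3$. Placing these over the common denominator $6b^2(a+b)$, the numerator telescopes as $2a(a+b)+(3b+a)(b-a)=a^2+3b^2$, yielding $\sum_{\mathcal{O}_h}\kn_{a,b}=\tfrac{a^2+3b^2}{6b^2(a+b)}h^3+O(h^2)$. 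Multiplying by $8$ and dividing by $|\mathcal{B}_h|=4h(h+1)+1\sim 4h^2$ produces the asserted average $\tfrac{a^2+3b^2}{3b^2(a+b)}h+O(1)$, whence $v(\kn_{a,b})=\tfrac{2b^2(a+b)}{a^2+3b^2}$.

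The genuinely new input—the piecewise-linear profile of $\kn_{a,b}$—is entirely supplied by Theorem \ref{wklm}, so the remaining work is an elementary integral computation. The only step requiring care is the bookkeeping of error terms: one must check that the symmetry-boundary correction ($O(h^2)$), the per-point approximation error ($O(bh^2)$), and the Riemann-sum error ($O(h^2)$) are all of order $h^2$, hence uniformly lower order than the $h^3$ main term once divided by $|\mathcal{B}_h|\sim 4h^2$. I expect this uniformity bookkeeping, rather than any conceptual difficulty, to be the main obstacle to a fully rigorous write-up.
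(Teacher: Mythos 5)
Your proposal is correct and follows essentially the same route as the paper: reduce by the dihedral symmetry to the octant $0\leq y\leq x\leq h$, substitute the piecewise-linear profile $x/b$ and $(x+y)/(a+b)$ from Theorem \ref{wklm} at a total cost of $O(bh^2)$, and extract the $h^3$ main term, whose coefficient per octant simplifies via $2a(a+b)+(3b+a)(b-a)=a^2+3b^2$ exactly as in the paper's algebra. The only difference is cosmetic: the paper evaluates the lattice sums exactly (inner sum over $y$, then $\sum_{x\leq h}x^2 = \frac{h(h+1)(2h+1)}{6}$), whereas you replace them by integrals with a Riemann-sum error bound; both give the same main term and the same lower-order $O(h^2)$ errors.
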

 
 See Remark \ref{fbkn} for a consequence of Theorem \ref{knab} when one takes $a$, $b$ to be consecutive Fibonacci numbers --- which we call \emph{Fiboknights}.

\section{Knights in \texorpdfstring{$\Z^2$}{Z\^{}2}}
 We start with a lemma estimating how long the $(a,b)$-knight takes to access a point in $\mathcal{B}_{a+b}$.
 
 \begin{lem}\label{Bab}
  Let $b> a \geq 1$ be integers with $\gcd(a,b)=1$ and $a+b$ odd. For every $(x,y)\in \mathcal{B}_{a+b}$, we have $\kn_{a,b}(x,y) = O(b)$ uniformly for $a,b$.
 \end{lem}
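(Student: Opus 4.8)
The plan is to exploit the eightfold symmetry of $\kn_{a,b}$ to reduce to the octant $0 \le y \le x \le a+b$, and then to reach $(x,y)$ using a \emph{bounded} number of cheap two-move building blocks together with at most one parity-fixing move. The building blocks I would use are the axis combinations $(b,a)+(b,-a)=(2b,0)$ and $(a,b)+(a,-b)=(2a,0)$, together with their reflections, so that all of $(\pm 2a,0),(\pm 2b,0),(0,\pm 2a),(0,\pm 2b)$ are available in two moves each.

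The crucial quantitative step is the following: for any integer $N$ with $|N| = O(b)$ one can reach $(2N,0)$, and by symmetry $(0,2N)$, in $O(b)$ moves. Indeed, writing $N = pb + qa$ and using $\gcd(a,b)=1$ to select the solution with $0 \le p < a$, one gets $q = (N-pb)/a$ with $|q| \le (|N| + ab)/a = O(b)$; stacking $|p|$ copies of $\pm(2b,0)$ and $|q|$ copies of $\pm(2a,0)$ with the correct signs then reaches $(2N,0)$ in $2(|p|+|q|) = O(b)$ moves, with an implied constant independent of $a,b$. The point I would stress is that the $O(b)$ bound (rather than $O(b^2)$) comes precisely from navigating with the \emph{long} steps $(2a,0),(2b,0)$ directly, never with the unit step $(2,0)$, whose own cost is already $\Theta(b)$.

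Granting this, the assembly is short and splits by parity. If $x$ and $y$ are both even, I reach $(x,0)$ and then $(0,y)$ by the estimate above. If $x$ and $y$ are both odd (so $x+y$ is still even), I first play the legal two-move anti-diagonal combo $(b,a)+(-a,-b) = (b-a,\,a-b)$, whose coordinates are both odd because $a+b$ is odd; the residual target $(x-b+a,\,y-a+b)$ is then even in both coordinates and still of size $O(b)$, so it is handled as before. Finally, if $x+y$ is odd I first play the single move $(b,a)$, leaving a white target $(x-b,\,y-a)$ with coordinates of size $O(b)$, and invoke the even case. In every branch the total move count is $O(b)$ with an absolute implied constant, uniformly in $a,b$, which is the claim.

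The main obstacle is the efficiency bookkeeping rather than the existence of a path: since the shortest vectors the knight can produce, such as $(2,0)$ or $(1,1)$, already cost on the order of $b$ moves, any decomposition that reaches the target by repeatedly adding such short vectors would cost $\Theta(b^2)$. The whole art is to keep the \emph{number} of building blocks bounded while letting each block be a genuine $\Theta(b)$-length jump, and to verify that the offsets introduced by the parity-fixing move and combo leave each coordinate of the residual target within a fixed constant multiple of $b$, so that the B\'ezout estimate of the second paragraph applies with a single uniform constant.
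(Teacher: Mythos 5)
Your proof is correct and follows essentially the same route as the paper's: both rest on the two-move axis blocks $(\pm 2a,0),(\pm 2b,0)$, a B\'ezout representation of small integers with coefficients of size $O(b)$ (giving $(2N,0)$, hence all even-coordinate points, in $O(b)$ moves), and a final parity correction by a bounded number of knight moves. One small slip to fix: in your last branch the white residual $(x-b,\,y-a)$ need not have both coordinates even (take $a$ odd, $b$ even, $x$ odd, $y$ even), so it must be routed through your both-odd case rather than directly through the even case --- a one-line repair, since that case is already established earlier in your own argument.
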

 \begin{proof}
  Since $\gcd(a,b) = 1$, for every $1\leq k\leq b$ there are $x,y\in \Z$ with $ax+by=k$, and we can select $x,y$ such that $|x|\leq b$, $|y|\leq a$. Hence, since $\kn_{a,b}$ is symmetric,
  \[ (2k,0) = x\big((a,b) + (a,-b)\big) + y\big((b,a) + (b,-a)\big) \]
  is accessible in $2(|x|+|y|)\leq 2(a+b) < 4b$ moves, and so are the points $(-2k,0)$, $(0,2k)$, $(0,-2k)$. This implies that every point in $\mathcal{B}_{a+b}$ with even coordinates is accessible in $O(b)$ moves. By symmetry, it then suffices to show $\kn_{a,b}(1,0) = O(b)$.
  
  Suppose that $a$ is even (so $b$ is odd). Then, the point $(1-a,-b) \in \mathcal{B}_{a+b}$ has even coordinates, and so is accessible in $O(b)$ moves. Therefore, so is $(1,0) = (1-a,-b) + (a,b)$. The case when $a$ is odd (so $b$ is even) is similar.
 \end{proof}
 
\subsection{Proof of Theorem \ref{wklm}}
 We prove the parts separately.\medskip
 
 \noindent
 $\bullet\text{ \underline{Part} (i):}$
 Let $\ell := \lfloor x/b\rfloor$, so that $\ell b \leq x < (\ell+1)b$ and $0\leq y < (\ell+1)a$. Because $x\geq \ell b$, we have $\kn_{a,b}(x,y) \geq \ell$. On the other hand, for each integer $0\leq k \leq \ell/2$,
 \[ \big(\ell - k\big)(b,a) + k(b,-a) = (\ell b, (\ell-2k)a), \]
 so all the points in $\mathcal{S}_{(\ell b, \ell a)} := \{(\ell b, (\ell-2k) a) ~|~ 0\leq k \leq \ell/2 \}$ are accessible in $\ell$ moves or less. All the points $(x,y)$ with $\ell b \leq x < (\ell+1)b$ and $0\leq y < (\ell+1)a$ are at distance\footnote{With respect to the $\max$ norm.} at most $a+b$ from $\mathcal{S}_{(\ell b, \ell a)}$. Since, by Lemma \ref{Bab}, $\kn_{a,b}$ accesses all the points of $\mathcal{B}_{a+b}$ in $O(b)$ moves, it follows that $\kn_{a,b}(x,y) \leq \ell + O(b)$.\medskip
 
 \noindent
 $\bullet\text{ \underline{Part} (ii):}$
 Let $t,u\in\R_{\geq 0}$ be such that $(x,y) = t(a,b) + u(b,a)$, so that $\kn_{a,b}(x,y) \geq t+u$. Since
 \[ \begin{pmatrix} a & b \\ b & a \end{pmatrix} \begin{pmatrix} t \\ u \end{pmatrix} = \begin{pmatrix} x \\ y \end{pmatrix} \iff \frac{1}{b^2-a^2}\begin{pmatrix} -a & b \\ b & -a \end{pmatrix} \begin{pmatrix} x \\ y \end{pmatrix} = \begin{pmatrix} t \\ u \end{pmatrix}, \]
 we have $t = (by-ax)/(b^2-a^2)$, $u = (bx-ay)/(b^2-a^2)$ (both strictly positive, because $y/x > a/b$), and hence
 \begin{align*}
  \kn_{a,b}(x,y) &\geq \frac{(b-a)(x+y)}{b^2-a^2} = \frac{x+y}{a+b}.
 \end{align*}
 On the other hand, $\lfloor t\rfloor (a,b) + \lfloor u\rfloor (b,a) = (x,y) + \ul{r}$, where $\ul{r} \in \mathcal{B}_{a+b}$. Since, by Lemma \ref{Bab}, $\kn_{a,b}$ accesses all the points of $\mathcal{B}_{a+b}$ in $O(b)$ moves, it follows that $\kn_{a,b}(x,y) \leq \lfloor t\rfloor + \lfloor u\rfloor + O(b) = \frac{x+y}{a+b} +O(b)$. \hfill$\square$
 
 \subsection{Distribution of \texorpdfstring{$\kn/\ki$}{K/N}}\label{dnst}
  It follows from Theorem \ref{wklm} that, for $x\geq y \in \Z_{\geq 0}$, the ratio $\frac{\kn_{a,b}(x,y)}{\ki(x,y)}$ lies essentially in between $\frac{1}{b}$ and $\frac{2}{a+b}$:
  \[ \frac{\kn_{a,b}(x,y)}{\ki(x,y)} =
  \begin{cases}
   \displaystyle \frac{1}{b} + O\bigg(\frac{b}{x}\bigg) &\text{if } \dfrac{y}{x}\leq \dfrac{a}{b},\\[1em]
   \displaystyle \frac{1}{a+b}\bigg(1+\frac{y}{x}\bigg) + O\bigg(\frac{b}{x}\bigg) &\text{if } \dfrac{y}{x}> \dfrac{a}{b}.
  \end{cases} \] 
  Analysing this ratio in the box $\mathcal{B}_h$, one can study the \emph{distribution} of $\kn_{a,b}/\ki$ via the real function
  \[ D_{a,b}(t) := \lim_{h\to+\infty} \frac{\#\{(x,y) \in \mathcal{B}_h ~|~ \frac{\kn_{a,b}(x,y)}{\ki(x,y)} \leq t \}}{|\mathcal{B}_{h}|}. \]
  Both $\kn_{a,b}$ and $\ki$ are symmetric. Therefore, since $\frac{1}{a+b}(1+\frac{y}{x}) \leq t$ if and only if $\frac{y}{x} \leq (a+b)t-1$, and the proportion of points in $\mathcal{B}_h \cap \{(x,y)\in \Z_{\geq 0} ~|~ x\geq y\}$ with $\frac{y}{x} \leq u$ equals $\frac{2}{h(h+1)}\sum_{x=1}^{h} \sum_{y=1}^{\lfloor ux\rfloor} 1 = u + O(1/h)$, we have
  \begin{equation} D_{a,b}(t) = \begin{cases}
                   0 &\text{if } t< \frac{1}{b}, \\
                   (a+b)t-1 &\text{if } \frac{1}{b} \leq t \leq \frac{2}{a+b}, \\
                   1 &\text{if } t > \frac{2}{a+b}.
                  \end{cases} \label{distrib} \end{equation}
                  
\subsection{Proof of Theorem \ref{knab}}
 By the symmetries of $\kn_{a,b}(x,y)$, we have
 \begin{equation}
  \lim_{h\to +\infty} \frac{3}{2h}\Bigg(\frac{1}{|\mathcal{B}_h|} \sum_{\ul{p}\in \mathcal{B}_h} \kn_{a,b}(\ul{p})\Bigg) = \lim_{h\to +\infty} \frac{3}{2h}\Bigg(\frac{2}{h(h+1)} \sum_{\substack{x,y \in \Z_{\geq 0} \\ 1\, \leq\, y\, \leq\, x\, \leq h}} \kn_{a,b}(x,y) \Bigg), \hspace{-1em} \label{limitao}
 \end{equation}
 so it suffices to prove the existence and calculate the right-hand side.
 
 By Theorem \ref{wklm}, we have
 \begin{align*}
  \sum_{\substack{x,y \in \Z_{\geq 0} \\ 1\, \leq\, y\, \leq\, x\, \leq h}} \kn_{a,b}(x,y) &= \sum_{x=1}^{h} \bigg\lfloor \frac{a}{b}\, x\bigg\rfloor \frac{x}{b} + \sum_{x=1}^{h}\sum_{\substack{y=1 \\ y/x\, >\, a/b}}^{x} \frac{x+y}{a+b} + O(bh^2) \\
  &= \sum_{x=1}^{h}\Bigg(\frac{a}{b^2} + \frac{1}{a + b} \sum_{\substack{y=1 \\ y/x\, >\, a/b}}^{x} \bigg(1 + \frac{y}{x}\bigg)\frac{1}{x}\Bigg) x^2 + O(bh^2).
 \end{align*}
 
 Since
 \begin{align*}
  \sum_{\substack{y=1 \\ y/x\, >\, a/b}}^{x} \bigg(1 + \frac{y}{x}\bigg)\frac{1}{x} &= \frac{1}{x}\Bigg(\sum_{\substack{y=1 \\ y/x\, >\, a/b}}^{x} 1\Bigg) + \frac{1}{x^2}\Bigg(\sum_{\substack{y=1 \\ y/x\, >\, a/b}}^{x} y\Bigg) \\
  &= \bigg(1-\frac{a}{b}\bigg) + \frac{1}{2}\bigg(1-\frac{a^2}{b^2}\bigg) + O\bigg(\frac{1}{x}\bigg),
 \end{align*}
 it follows that:
 \begin{align*}
  \sum_{\substack{x,y \in \Z_{\geq 0} \\ 1\, \leq\, y\, \leq\, x\, \leq h}} \kn_{a,b}(x,y) = \Bigg(\frac{a}{b^2} + \frac{1}{a + b} \bigg(\bigg(1-\frac{a}{b}\bigg) + \frac{1}{2}\bigg(1-\frac{a^2}{b^2}\bigg)\bigg) \Bigg) \frac{h(h+1)(2h+1)}{6}& \\
  +\, O(bh^2).&
 \end{align*}
 Plugging this into the limit $v(\kn_{a,b})$, we obtain
 \begin{align*}
  v(\kn_{a,b}) &= \lim_{h\to +\infty} \frac{2h}{3}\Bigg(\frac{2}{h(h+1)} \sum_{\substack{x,y \in \Z_{\geq 0} \\ 1\, \leq\, y\, \leq\, x\, \leq h}} \kn_{a,b}(x,y) \Bigg)^{-1} \\
  &= \Bigg(\frac{a}{b^2} + \frac{1}{a + b} \bigg(\bigg(1-\frac{a}{b}\bigg) + \frac{1}{2}\bigg(1 - \frac{a^2}{b^2}\bigg) \bigg) \Bigg)^{-1} \\
  &= \frac{2}{3} \Bigg(\frac{2a^2 + 2ab}{3b^2} + \bigg(1-\frac{a}{b}\bigg)\bigg(1 + \frac{a}{3b}\bigg) \Bigg)^{-1}(a+b) \\
  &= \frac{2}{3}\bigg(1 + \frac{1}{3}\, \frac{a^2}{b^2} \bigg)^{-1}(a+b) = \frac{2(a+b)b^2}{a^2+3b^2},
 \end{align*}
 concluding the proof.\hfill$\square$

\section{Remarks}
 \begin{rem}
  One checks that calculating the average using \eqref{distrib} agrees, in fact, with (the inverse of) Theorem \ref{knab}:
  \begin{align*}
   \mathbb{E}\bigg(\frac{\kn_{a,b}}{\ki}\bigg) &:= \int_{0}^{+\infty} (1-D_{a,b}(t))\,\mathrm{d}t = \frac{1}{b} + \int_{1/b}^{2/(a+b)} (2-(a+b)t)\,\mathrm{d}t \\
   &= \frac{1}{b} + \frac{2(b-a)}{(b+a)b} - \frac{(b-a)(a+3b)}{2(a+b)b^2} = \frac{a^2+3b^2}{2(a+b)b^2}.
  \end{align*}
 \end{rem}

 \begin{rem}[On generality]\label{gens}
  The choice of the box $\mathcal{B}_h$ in Definition \ref{boxvel} is not generic, and different expanding regimes will give different answers for the ratio. In general, let $d\geq 2$ and $A\subseteq \Z^{2}$ be primitive set, and suppose that the origin $\ul{0}$ lies inside the convex hull $\mathcal{H}(A)$ of $A$. Write $A_{\ul{0}} = A\cup\{\ul{0}\}$. By Khovanskii's theorem \cite[Corollary 1]{kho92}, we have $|hA_{\ul{0}}| = \mathrm{vol}(\mathcal{H}(A))\,h^d + O(h^{d-1})$ and
  \[ |hA_{\ul{0}}\setminus (h-1)A_{\ul{0}}| = d\,\mathrm{vol}(\mathcal{H}(A))\, h^{d-1} + O(h^{d-2}), \]
  where $\mathrm{vol}(\mathcal{H}(A))$ denotes the $d$-volume of the convex hull of $A$. Thus,
  \begin{align*}
   \frac{1}{|hA_{\ul{0}}|} \sum_{\ul{p}\in hA_{\ul{0}}} A(\ul{p}) = \frac{1}{|hA_{\ul{0}}|} \sum_{\ell=1}^{h} \sum_{\ul{p}\in \ell A_{\ul{0}}\setminus (\ell-1)A_{\ul{0}}} A(\ul{p}) = \frac{dh}{d+1} + O(1).
  \end{align*}
  Given a finite primitive set $B\subseteq \Z^{d}$, we define the velocity of $B$ \emph{relative to $A$} as
  \[ v_A(B) := \lim_{h\to+\infty} \bigg(1+\frac{1}{d}\bigg) h\, \Bigg(\frac{1}{|hA_{\ul{0}}|} \sum_{\ul{p}\in hA_{\ul{0}}} B(\ul{p}) \Bigg)^{-1}. \]
  It would be interesting to calculate the velocity of generalized knights with respect to the generalized king $\mathrm{K}^{d} = \{\ul{p}\in\Z^{d} ~|~ \norm{p}_{\infty} = 1\}$, or velocities with respect to other pieces such as the \emph{taxicab} $\mathrm{T} := \{\ul{p} = (x,y)\in \Z^{2} ~|~ \norm{\ul{p}}_{1} := |x|+|y| = 1\} = \{(1,0), (0,1), (-1,0), (0,-1)\}$.
 \end{rem}
 

 \begin{rem}[Fiboknights]\label{fbkn}
  Fibonacci numbers $F_0 =1$, $F_1 = 1$, $F_n = F_{n-1} + F_{n-2}$ (for $n\geq 2$) satisfy the property that $F_{3n}$ is even, $F_{3n+1}$, $F_{3n+2}$ are odd, and $\gcd(F_n, F_{n+1})=1$. Define the \emph{$n$-th Fiboknight} as 
  \[ \mathrm{FN}_n = \kn_{F_{n+1},F_{n+2}},\]
  so that the usual knight is the first Fiboknight. By the properties of Fibonacci numbers, $\mathrm{FN}_n$ is only primitive for $n$ such that $3\nmid n$.
  
  Let $k\geq 1$, and let $n\to \infty$ through $n\in\Z_{\geq 1}$ for which $\mathrm{FN}_n$, $\mathrm{FN}_{n+k}$ are primitive. Then, by Theorem \ref{knab}, writing $\phi = \frac{1+\sqrt{5}}{2}$ for the golden ratio, we have
  \begin{align*}
   \lim_{\substack{n\to \infty \\ 3\nmid n,\, n+k}} \frac{v(\mathrm{FN}_{n+k})}{v(\mathrm{FN}_n)} &= \lim_{\substack{n\to \infty \\ 3\nmid n,\, n+k}} \frac{\displaystyle \frac{2(F_{n+k+1} + F_{n+k+2})F_{n+k+2}^2}{F_{n+k+1}^2 + 3F_{n+k+2}^2}}{\displaystyle \frac{2(F_{n+1} + F_{n+2})F_{n+2}^2}{F_{n+1}^2 + 3F_{n+2}^2}} \\
   &= \lim_{\substack{n\to \infty \\ 3\nmid n,\, n+k}} \frac{F_{n+k+3}}{F_{n+3}}\frac{F_{n+k+2}^2}{F_{n+2}^2} \frac{(F_{n+1}^2 + 3F_{n+2}^2)}{(F_{n+k+1}^2 + 3F_{n+k+2}^2)} \\
   &= \phi^k\, \phi^{2k}\, \frac{1+3\phi^2}{\phi^{2k} + 3\phi^{2k+2}} \\
   &= \phi^{k}.
  \end{align*}
  In particular, the ratio of the velocity of consecutive Fiboknights (which can only be of the form $\mathrm{FN}_{3n+1}$, $\mathrm{FN}_{3n+2}$) converges to $\phi$. In general, for fixed $m,k\geq 1$,
  \begin{align*}
   \lim_{\substack{n\to \infty \\ \text{primitive}}} \frac{v(\mathrm{N}_{F_{n+k}, F_{n+m+k}})}{v(\mathrm{N}_{F_{n}, F_{n+m}})} &= \frac{\displaystyle\ \frac{2(\phi^{k} + \phi^{m+k}) \phi^{2(m+k)}}{\phi^{2k} + 3\phi^{2(m+k)}}\ }{\displaystyle \frac{2(1 + \phi^{m}) \phi^{2m}}{1 + 3\phi^{2m}}} = \phi^{k}.
  \end{align*}
 \end{rem}
  
\addtocontents{toc}{\protect\setcounter{tocdepth}{0}}
\section*{Acknowledgements}
 I thank the anonymous referees for the insightful comments.
 
\addtocontents{toc}{\protect\setcounter{tocdepth}{1}}


\end{document}